\newtheorem{theorem}{Theorem}[section]
\newtheorem{lemma}[theorem]{Lemma}
\newtheorem{proposition}[theorem]{Proposition}
\theoremstyle{definition}
\newtheorem{example}[theorem]{Example}
\theoremstyle{remark}
\newcommand{\C}{\mathbb C}
\def \bC{\mathbb C}
\def\bR{\mathbb R}
\def\im{\text{{\rm Im }}}
\def\re{\text{{\rm Re }}}
\DeclareMathOperator{\jac}{Jac}
\title[CR Transversality of holomorphic mappings]{CR transversality of holomorphic mappings between generic submanifolds in complex space}
\author{Peter Ebenfelt}
\address{Department of Mathematics, University of California at San Diego, La Jolla, CA 92093-0112}
\email{pebenfel@math.ucsd.edu}
\author{Duong Ngoc Son}
\address{Department of Mathematics, University of California at San Diego, La Jolla, CA 92093-0112}
\email{snduong@math.ucsd.edu}
\thanks{The first author was partly supporting by the NSF grant DMS-0701121. The second author acknowledges a scholarship from the Vietnam Education Foundation for his graduate study at UC San Diego and a grant from NAFOSTED (Vietnam).}
\begin{document}
\begin{abstract} We show that a holomorphic mapping sending one generic submanifold into another of the same dimension is CR transversal to the target submanifold provided that the source manifold is of finite type and the map is of generic full rank. This result and its corollaries completely resolve two questions posed by Linda P.~Rothschild and the first author in a paper from 2006.
\end{abstract}
\maketitle
\section{Introduction}
In this paper, we study CR transversality of holomorphic mappings sending a generic submanifold in $\C^N$ into another of the same dimension. Recall that if $U$ is an open subset of
$\C^{N}$, $H$ a holomorphic mapping $ U\to \C^{N}$, and
$M'$ a generic submanifold through a point $p':=H(p)$ for some $p\in U$, then $H$ is
said to be {\it CR transversal} to $M'$ at $p$ if
\begin{equation}
T^{1,0}_{p'}
M'+dH(T^{1,0}_p\C^N)=T^{1,0}_{p'}\C^N,
\end{equation}
where $T^{0,1}M' = \C TM' \cap T^{0,1}\C^N$ denotes the CR bundle on $M'$ and $T^{1,0}M' = \overline{T^{0,1}M'}$  its complex conjugate. We remark that CR transversality of a holomorphic mapping is a property that in general is strictly stronger than that of transversality as a smooth mapping, i.e.\ CR transversality implies transversality but the converse does not hold in general.
Transversality is an important and basic notion in geometry and analysis. For instance, an example of a transversality result in analysis is the  classical Hopf Lemma for subharmonic functions in a smoothly bounded domain in $\bR^n$ (see e.g.\ \cite{GT83}). The reader is referred to e.g.\ the paper \cite{ER06} for further elaboration on the significance of tranversality in geometry and analysis. Also, the reader is referred to Section \ref{prelim} for definitions and further explanation of the notions that appear in this introductory section.

The problem of CR transversality has been considered in various situations by many authors, see e.g.\ \cite{Forn76}, \cite{Forn78}, \cite{P77}, \cite{BB82}, \cite{BRgeom}, \cite{BRhopf},  \cite{CR94}, \cite{CR98}, \cite{BHR95},
\cite{HP96}, \cite{ER06}, \cite{BER07} and the references therein.  In this paper, we will consider the situation in which there is a generic submanifold $M\subset U$, of the same dimension as $M'$, such that $p\in M$ and $H(M)\subset M'$. In \cite{ER06}, this same situation is considered and it is proved that the map $H$ is CR transversal to $M'$ at $p\in M$ provided that $M'$ is of finite type (in the sense of Kohn and Bloom-Graham) at $p':=H(p)$ and the restriction of $H$ to the Segre variety of $M$ at $p$ is finite (as a mapping into the Segre variety of $M'$ at $p'$). The condition on $H$ is satisfied if, for instance, $H$ itself is a finite mapping $(\bC^N,p)\to (\bC^N, p')$ (i.e.\ the inverse image $H^{-1}(p')$ equals the singleton $\{p\}$ as a germ of a variety at $p$). By combining several results including the one mentioned above, it is noted in \cite{ER06} that if $M$ and $M'$ are {\it hypersurfaces} (i.e.\ submanifolds of real codimension one) and $M$ is essentially finite at $p$, a stronger condition than finite type in the hypersurface case, then $H$ is CR transversal to $M'$ at $p'$ under the assumption that $\jac H\not\equiv 0$, where $\jac H$ denotes the Jacobian determinant of $H$ and $\not\equiv$ means ``not identically zero"; we remark that $\jac H\not\equiv 0$ is strictly weaker than being a finite map. It was conjectured in \cite{ER06} that the condition of essential finiteness of $M$ at $p$ could be weakened to finite type at $p$ while maintaining the weak requirement $\jac H\not\equiv 0$ for the mapping. It was also conjectured that the condition $\jac H\not \equiv 0$ would be sufficient to conclude CR tranversality in the case of higher codimensional generic submanifolds $M$ and $M'$ provided that $M$ is assumed to be of finite type and essentially finite at $p$. (For higher codimensional generic submanifolds, the conditions of being of finite type and essentially finite are unrelated, and the authors of \cite{ER06} imposed both conditions on $M$ in this conjecture to be on the safe side.) In this paper, we prove both of these conjectures. Indeed, we even prove that the condition of essential finiteness in the higher codimensional case is superfluous. More precisely,  we have the following theorem.

\begin{theorem}\label{main} Let $M, M' \subset \C^N$ be smooth generic submanifolds of the same dimension through $p$ and $p'$ respectively, and $H: (\C^N, p) \to (\C^N,p')$ a germ of a holomorphic mapping such that $H(M) \subset M'$. Assume that $M$ is of finite type at $p$ and $\jac H \not\equiv 0$. Then $H$ is CR transversal to $M'$ at $p'$.
\end{theorem}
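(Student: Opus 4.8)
The plan is to set up normal coordinates and Segre variety machinery, then reduce CR transversality to a statement about the finiteness of the restricted map.

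The plan is to reduce CR transversality at $p'$ to an explicit determinant condition, and then to promote the failure of that condition from the point $p$ to a whole neighborhood by propagating along Segre sets, where finite type finally collides with $\jac H\not\equiv0$. First I would choose normal coordinates $(z,w)\in\C^n\times\C^d$ for $M$ at $p=0$ and $(z',w')$ for $M'$ at $p'=0$ (so $N=n+d$, with $d$ the codimension), and vector defining functions $\rho=(\rho_1,\dots,\rho_d)$, $\rho'=(\rho'_1,\dots,\rho'_d)$ with $\partial\rho_1\wedge\cdots\wedge\partial\rho_d$ and $\partial\rho'_1\wedge\cdots\wedge\partial\rho'_d$ nonvanishing. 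Writing $H=(F,G)$, the hypothesis $H(M)\subset M'$ gives $\rho'_j\circ H=0$ on $M$, so by the smooth division lemma $\rho'_j\circ H=\sum_k a_{jk}\rho_k$ for smooth germs $a_{jk}$. Applying $\partial$ and evaluating at $p$ yields $H^*\partial\rho'_j|_p=\sum_k a_{jk}(p)\,\partial\rho_k|_p$, and since $\sum_k(a_{jk}-\tilde a_{jk})\rho_k\equiv0$ forces $a_{jk}(p)=\tilde a_{jk}(p)$, the matrix $a(p)=(a_{jk}(p))$ is well defined. Dualizing the transversality condition, CR transversality of $H$ to $M'$ at $p'$ is equivalent to the linear independence of the $H^*\partial\rho'_j|_p$, hence to $\det a(p)\neq0$; in the chosen coordinates this reads $\det\bigl(\partial G_j/\partial w_k(0)\bigr)\neq0$, while the same jet computation gives $\partial G_j/\partial z_l(0)=0$.

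Next I would pass to the complexification, treating $\bar z,\bar w$ as independent variables $\zeta,\omega$ (and working with formal power series and formal Segre varieties when $M,M'$ are only smooth). The Segre variety of $M$ at $0$ is $\Sigma_0=\{w=0\}$, and $H(M)\subset M'$ complexifies to map $\Sigma_0$ into the Segre variety $\{w'=0\}$ of $M'$, so $G(z,0)\equiv0$ and hence $\partial_z G(z,0)\equiv0$. Thus the Jacobian matrix of $H$ along $\Sigma_0$ is block triangular and factors as $\jac H(z,0)=\det\bigl(\partial_z F(z,0)\bigr)\cdot b(z)$ with $b(z):=\det\bigl(\partial_w G(z,0)\bigr)$, recovering $b(0)\neq0$ as exactly the CR transversality condition. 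Assume, for contradiction, that $b(0)=0$. The heart of the proof is to promote this pointwise degeneracy to one valid on a full neighborhood. For this I would use the basic identity $G(z,Q(z,\zeta,\omega))=Q'\bigl(F(z,Q(z,\zeta,\omega)),\bar F(\zeta,\omega),\bar G(\zeta,\omega)\bigr)$ coming from $H(M)\subset M'$, and iterate it along the chain of Segre maps $v_1,v_2,\dots$, establishing analogues of the factorization above on each iterated Segre set and showing that the vanishing of the transversal determinant propagates from one order to the next.

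Since $M$ is of finite type at $p$, the Baouendi--Ebenfelt--Rothschild theorem guarantees that some iterated Segre set $S_{j_0}$ fills a neighborhood of $p$. Combined with the propagation of degeneracy, this would force $\jac H$ to vanish on an open set, i.e.\ $\jac H\equiv0$, contradicting the hypothesis. Equivalently, the propagated degeneracy makes $H$ map an open set into $M'$, which is impossible for a holomorphic map with $\jac H\not\equiv0$: its full-rank locus is open and dense, so its image contains an open subset of $\C^N$, whereas $M'$ has positive real codimension. Either way we reach a contradiction, so $b(0)\neq0$ and $H$ is CR transversal to $M'$ at $p'$.

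I expect the decisive obstacle to be precisely the propagation step: identifying the correct invariant that degenerates along the iterated Segre sets and controlling it through the reflection identity as the order increases, so that $b(0)=0$ really does spread to an open set rather than remaining confined to $\Sigma_0$. A secondary difficulty is carrying this out when $M$ and $M'$ are merely smooth, where genuine Segre varieties are unavailable and one must argue with formal maps and formal Segre sets, checking that finite type still yields the needed filling at the formal level and that the final rank/openness contradiction survives the passage from formal to convergent data.
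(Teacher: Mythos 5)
Your opening reduction is sound and coincides with the paper's: in normal coordinates, CR transversality at $p'$ is equivalent to $\det a(0)\neq0$, which (since $Q'_{z'}$ vanishes at $\chi'=0$) is $\det G_w(0)\neq0$; and $G(z,0)\equiv0$ gives the block-triangular factorization $\jac H(z,0)=\det F_z(z,0)\cdot\det G_w(z,0)$. Your endgame is also the paper's: by the Baouendi--Ebenfelt--Rothschild criterion, finite type means some iterated Segre map $v^k$ has generic rank $N$, and then the composition lemma (if $K\circ\phi\equiv0$ with $\phi$ of generic rank $N$, then $K\equiv0$) converts vanishing of $\jac H$ along $v^k$ into $\jac H\equiv0$ -- this is also the correct way to phrase the contradiction in the formal category, where your alternative argument (``the image of $H$ contains an open set which cannot lie in $M'$'') does not make sense and is in any case not what the degeneracy gives. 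The problem is the middle step: the propagation is exactly what you defer as ``the decisive obstacle,'' and the invariant you propose, $b(z)=\det G_w(z,0)$, cannot propagate as it stands -- it is defined only along the first Segre set, and nothing in your setup transports its vanishing to $v^3,v^5,\dots$.

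What the paper actually constructs is a two-sided factorization of $\det a$ restricted to the complexification $\mathcal M$, and this is the missing idea. Differentiating the mapping identity \eqref{mapping} in $(z,w)$ and separately in $(\chi,\tau)$, assembling the results in matrix form, and applying Cramer's rule (together with the invertibility of $Q'_{\tau'}$, which follows from normality) yields two $d\times d$ identities on $\mathcal M$:
\begin{equation*}
\det H_Z(\Psi)\, I_d = a(\Psi)\,C,\qquad \det \bar H_\xi(\Psi)\, I_d = a(\Psi)\,E,
\end{equation*}
where $\Psi(z,\chi,\tau)=(z,Q(z,\chi,\tau),\chi,\tau)$. Taking determinants and invoking unique factorization in $\C[[z,w,\chi]]$ and $\C[[z,\chi,\tau]]$ shows that $\det a\circ\Psi$ equals, up to units, a series $b(z,w)$ in $Z$ alone dividing $(\det H_Z)^d$, and also a series $c(\chi,\tau)$ in $\xi$ alone dividing $(\det\bar H_\xi)^d$; hence $b\cong c$ (unit-equivalence) on $\mathcal M$. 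This pair $(b,c)$ is the propagating invariant you were looking for: since $(v^{k+1},\overline{v^k})\in\mathcal M$ and $(v^{k-1},\overline{v^k})\in\mathcal M$, one gets $b\circ v^{k+1}\cong c\circ\overline{v^k}\cong b\circ v^{k-1}$, so inductively $b\circ v^{2j+1}\cong b\circ v^1\equiv0$ (the seed vanishing being $\det a(0)=0$ pushed through the relation $b\cong c$), and then $\det H_Z\circ v^{2j+1}\equiv0$ because $b$ divides $(\det H_Z)^d$. Without the matrix identities, the Cramer's-rule step, and the UFD divisor argument -- none of which appear in your proposal -- the proof has a genuine hole precisely where you predicted it would.
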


We note that if $M'$ is of finite type at $p'$ and the restriction of $H$ to the Segre variety of $M$ at $p$ is finite, then $M$ is of finite type at $p$ and $\jac H\not\equiv 0$ (see Proposition 2.3 in \cite{ER06}). Thus, Theorem \ref{main} also implies the result in \cite{ER06} described above (Theorem 1.4 in \cite{ER06}).

The condition that $M$ is of finite type at $p$ in Theorem \ref{main} is optimal, as the following example (see Example 6.2 \cite{ER06}) shows.

\begin{example} {\rm Let $H:(\C^2,0)\to(\C^2,0)$ be the finite  mapping $H(z,w)=(z,w^2)$, and $M'\subset\C^2$ the hypersurface given  by $\im w=2(\re w)|z|^2$. If $M\subset \C^2$ denotes the hypersurface given by $\im w= (\re w)\xi(|z|^2)$, where $\xi=\xi(|z|^2)$ is the solution (whose existence and uniqueness is guaranteed by the Implicit Function Theorem) to the equation
\begin{equation}\label{xi}
\xi-(1-\xi^2)|z|^2=0,\qquad \xi(0)=0,
\end{equation}
then $H$ sends $M$ into $M'$, satisfies $\jac H\not\equiv0$, but is not CR transversal to $M'$ at $0$. To see that $H$ sends  $M$ into $M'$, let us denote by $\rho'(z,w,\bar z, \bar w):=\im w-2(\re w)|z|^2$ and note that $H$ sends $M$ into $M'$ if and only if $\rho'\circ H$ vanishes on $M$. In other words, if we write $w=s+it$,  then $H$ sends $M$ into $M'$ if and only if
$$
2st-2(s^2-t^2)|z|^2=0
$$
when $t=s\xi(|z|^2)$. The latter follows immediately from the definition \eqref{xi} of $\xi$.
}
\end{example}    

It is also clear that some {\it a priori} condition on the mapping $H$  is necessary for the conclusion of Theorem \ref{main} to hold, as is easily verified by simple examples. The reader is referred to e.g.\ Example 2.4 in \cite{BER07} (in which one may consider $M\subset\C^2$ as a finite type hypersurface in $\C^3$ via a standard embedding) for an example of a mapping $H$ (with $\jac H\equiv0$) that is not CR transversal to $M'$ at 0. If the source manifold $M$ is holomorphically nondegenerate, then in fact the condition $\jac H\not\equiv 0$ is necessary for CR transversality to hold (see Proposition \ref{ngocanh} below). We state here the following result, which is a direct consequence of Theorem \ref{main} and Proposition \ref{ngocanh}.

\begin{theorem}\label{holomorphicnondegeneracycase}
Let $M, M' \subset \C^N$ be smooth generic submanifolds of the same dimension through $p$ and $p'$ respectively, and $H: (\C^N, p) \to (\C^N,p')$ a germ of a holomorphic mapping such that $H(M) \subset M'$. Assume that $M$ is holomorphically nondegenerate and of finite type at $p$.  Then $H$ is CR transversal to $M'$ at $p$ if and only if $\jac H\not\equiv 0$.
\end{theorem}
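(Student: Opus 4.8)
The plan is to prove the two implications separately and to observe that each is governed by one of the two results already quoted, so that the statement is essentially a repackaging of Theorem~\ref{main} together with the necessity recorded in Proposition~\ref{ngocanh}. For the sufficiency (the ``if'' direction), I would assume $\jac H\not\equiv 0$. Since $M$ is of finite type at $p$ by hypothesis, Theorem~\ref{main} applies verbatim and yields that $H$ is CR transversal to $M'$ at $p'=H(p)$, which is exactly CR transversality at $p$ in the sense of the introduction. Note that holomorphic nondegeneracy plays no role here: finite type at $p$ together with $\jac H\not\equiv 0$ already suffice.

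For the necessity (the ``only if'' direction), I would argue by contraposition: assuming $\jac H\equiv 0$, I would show that $H$ cannot be CR transversal to $M'$ at $p$, which is precisely the content of Proposition~\ref{ngocanh}, whose hypothesis is the holomorphic nondegeneracy of $M$. The mechanism I would use is the following. Let $\rho'=(\rho'_1,\dots,\rho'_d)$ be a local defining function for $M'$ near $p'$, where $d$ is the common real codimension. Since $H(M)\subset M'$, each $\rho'_k\circ H$ vanishes on $M$; the key point is that CR transversality of $H$ at $p$ is equivalent to the linear independence at $p$ of the pulled-back $(1,0)$-forms $\partial(\rho'_1\circ H),\dots,\partial(\rho'_d\circ H)$, a fact I would verify by a direct dimension count using the equal-dimension hypothesis. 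Under that independence the real differentials $d(\rho'_k\circ H)$ are also independent, so $\rho:=\rho'\circ H$ cuts out a codimension-$d$ submanifold containing $M$, which must then coincide with $M$ near $p$.

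Now the crucial observation is that $\rho'\circ H$ depends on $z$ only through $H(z)$ and $\overline{H(z)}$, and is therefore constant along the fibres of $H$ (on which both $H$ and $\overline{H}$ are constant). Consequently $M$, being a level set of $\rho'\circ H$, is a union of local fibres of $H$ near $p$. If $\jac H\equiv 0$, the generic rank $r$ of $H$ is strictly less than $N$, so these fibres are complex submanifolds of positive dimension $N-r\ge 1$, and on the dense open set where $dH$ has maximal rank the kernel distribution $\ker dH$ is a holomorphic subbundle admitting local holomorphic frames. Each such frame field is tangent to the fibres of $H$, hence tangent to $M$, and thus produces a nonzero germ of a holomorphic $(1,0)$ vector field tangent to $M$ arbitrarily close to $p$. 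This contradicts the holomorphic nondegeneracy of $M$, completing the contrapositive and hence the necessity.

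The main obstacle I anticipate lies not in the sufficiency, which is a direct citation of Theorem~\ref{main}, but in making the necessity argument fully rigorous at the marginal point $p$ itself. Two technical points require care: first, one must confirm that CR transversality is exactly the nondegeneracy of the pulled-back forms $\partial(\rho'_k\circ H)$, and in particular that $\rho'\circ H$ cuts out $M$ rather than a larger set; second, the vector fields produced from $\ker dH$ live only on the locus of maximal rank and may degenerate at $p$, so the hard part will be to invoke the propagation of holomorphic nondegeneracy, namely that nondegeneracy at $p$ forbids tangent holomorphic vector fields throughout a neighborhood of $p$, in order to obtain the contradiction as a genuine statement about germs at $p$. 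With these two points settled, exactly as provided by Proposition~\ref{ngocanh}, the theorem follows by combining the two directions.
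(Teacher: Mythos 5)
Your top-level decomposition is exactly the paper's: sufficiency is Theorem~\ref{main} (and you are right that holomorphic nondegeneracy plays no role there), and necessity is Proposition~\ref{ngocanh}; the paper proves the theorem precisely as a direct consequence of these two results. Had you left the necessity direction as a citation of Proposition~\ref{ngocanh}, your proposal would coincide with the paper's proof.

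The mechanism you sketch for the necessity, however, is genuinely different from the paper's proof of Proposition~\ref{ngocanh}, and it has a real gap that your last paragraph identifies but does not close. Your construction (the fibres of $H$ through points of $M$ lie in $M$, and local holomorphic frames of $\ker dH$ exist on the maximal-rank locus) produces nontrivial holomorphic vector fields tangent to $M$ only near points $q \neq p$ where $dH$ attains its generic rank; since $\jac H \equiv 0$, the point $p$ itself may well lie in the degenerate locus. To contradict nondegeneracy \emph{at} $p$ you must know that holomorphic degeneracy propagates from $q$ back to $p$. That propagation is a nontrivial theorem of Baouendi--Rothschild, valid for connected \emph{real-analytic} generic submanifolds (see \cite{BER99a}); it is not proved in this paper, and it is unavailable in the setting the paper actually works in, namely the formal category to which smooth submanifolds and their mappings are reduced in Section~\ref{prelim} (for a formal manifold at $p$ there are no ``nearby points''). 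Saying this step is ``exactly as provided by Proposition~\ref{ngocanh}'' is circular: the proposition \emph{is} the necessity statement, not the propagation lemma your argument needs. The paper's proof of Proposition~\ref{ngocanh} avoids the issue entirely by working formally at $p$: from $\jac H \equiv 0$ one gets a nontrivial formal vector $U(z,w)$ with $H_Z U \equiv 0$, hence a formal holomorphic vector field $L$ annihilating the components of $F$ and $G$; applying $L$ to the conjugated mapping identity $\bar G(\chi,\tau)-\bar Q'(\bar F(\chi,\tau),F(z,w),G(z,w))=\bar a(\chi,\tau,z,w)\,(\tau-\bar Q(\chi,z,w))$ annihilates the left-hand side, and CR transversality makes $\bar a$ invertible in $\C[[z,w,\chi,\tau]]$, so $L(\tau-\bar Q)$ lies in the ideal generated by $\tau-\bar Q$, i.e.\ $L$ is tangent to $M$ as a germ at $p$ itself---contradicting holomorphic nondegeneracy with no passage to nearby points. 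If you want to salvage your geometric route, you would need to add the propagation theorem (and restrict to real-analytic $M$), which is a strictly heavier import than the algebraic argument the paper uses.
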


If we impose stronger conditions on $M$, such as essential finiteness or finite nondegeneracy in addition to finite type, then we also obtain stronger conclusions as in \cite{ER06}. Indeed, by combining our Theorem \ref{main} with Theorems 6.1 and 6.6 in \cite{ER06}, we obtain the following result; the notions of of essential finiteness and finite nondegeneracy will not play a role in any of the proofs in this paper and, hence, the reader is referred to \cite{ER06} or the book \cite{BER99a} for their definitions.

\begin{theorem}\label{cor}
 Let $M, M' \subset \C^N$ be smooth generic submanifolds through $p$ and $p'$ respectively, and $H: (\C^N, p) \to (\C^N,p')$ a germ of a holomorphic mapping such that $H(M) \subset M'$ and $\jac H \not\equiv 0$ . If $M$ is of finite type and essentially finite at $p$, then $H$ is a finite map $(\C^N, p) \to (\C^N,p')$. If, in addition, $M$ is finitely nondegenerate at $p$, then $H$ is a local biholomorphism near $p$.
\end{theorem}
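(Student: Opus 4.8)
The plan is to treat this statement as a two-step upgrade of the weak hypothesis $\jac H\not\equiv0$, funneling everything through the main theorem so that the finer conclusions can be quoted directly from \cite{ER06}. The point is that Theorems 6.1 and 6.6 of \cite{ER06} already deliver, respectively, finiteness and local biholomorphy of $H$, but only once one knows that $H$ is CR transversal to $M'$ at $p'$; in \cite{ER06} that transversality had to be assumed outright (or extracted from a far stronger hypothesis on $H$), whereas here it will be supplied for free by Theorem \ref{main}.

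First I would invoke Theorem \ref{main}. Since $M$ is of finite type at $p$ and $\jac H\not\equiv0$, and since $M,M'$ are equidimensional generic submanifolds with $H(M)\subset M'$, the hypotheses of Theorem \ref{main} are met verbatim, so $H$ is CR transversal to $M'$ at $p'$. This is the only place the finite-type assumption is used, and it is precisely the hypothesis that \cite{ER06} could not dispense with on its own.

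With CR transversality now in hand, the essential finiteness of $M$ at $p$ places us exactly in the setting of Theorem 6.1 of \cite{ER06}, whose conclusion is that $H$ is a finite holomorphic map $(\C^N,p)\to(\C^N,p')$. For the second assertion, finite nondegeneracy of $M$ at $p$ is the additional input required by Theorem 6.6 of \cite{ER06}; combined with the CR transversality just established, that theorem yields that $H$ is a local biholomorphism near $p$. Thus both conclusions follow once one checks that the hypotheses of the respective theorems of \cite{ER06} are satisfied.

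The main obstacle here is not in the statement itself---which is essentially a bookkeeping argument matching hypotheses to the quoted theorems---but rather lies entirely in Theorem \ref{main}, which does the heavy lifting of converting the weak nondegeneracy condition $\jac H\not\equiv0$ into genuine CR transversality. The one subtlety I would double-check is the interplay of the two separate conditions on $M$: in the higher-codimensional case finite type and essential finiteness (resp.\ finite nondegeneracy) are independent, so one must confirm that finite type is what feeds Theorem \ref{main}, that essential finiteness (resp.\ finite nondegeneracy) is what feeds the results of \cite{ER06}, and that no implicit equidimensionality or finite-type hypothesis hidden in Theorems 6.1 and 6.6 of \cite{ER06} is left unverified.
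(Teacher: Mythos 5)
Your strategy is the same as the paper's --- feed Theorem \ref{main} the hypotheses ``finite type of $M$'' and ``$\jac H\not\equiv 0$'' to obtain CR transversality, then quote Theorems 6.1 and 6.6 of \cite{ER06} --- but the hypothesis-matching that you defer to a final ``double-check'' is exactly where the write-up falls short, and the paper's own proof shows that something nontrivial is needed there. Judging from that proof, Theorem 6.1 of \cite{ER06} is applied with three inputs: essential finiteness of $M$ at $p$, \emph{transversal regularity} of $H$ to $M'$ (a property implied by CR transversality, as the paper notes with a pointer to \cite{ER06}), and \emph{finite type of $M'$ at $p'$}. The last input is the real issue: the theorem you are proving assumes finite type of the \emph{source} $M$, not of the target, and in the higher-codimensional setting this does not transfer for free. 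The paper closes this gap by invoking Proposition 2.3 of \cite{ER06}: since $\jac H\not\equiv 0$ and $M$ is of finite type at $p$, the target $M'$ is of finite type at $p'$. Your proposal never establishes this (it is mentioned only inside the generic caveat at the end), so as written the application of Theorem 6.1 of \cite{ER06} --- and hence also of Theorem 6.6, which is applied on top of it --- is not justified.

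The fix is short: before quoting Theorem 6.1, insert the Proposition 2.3 step to transfer finite type from $M$ to $M'$, and note explicitly that CR transversality implies transversal regularity in the sense of \cite{ER06}. With those two additions your argument coincides with the paper's proof.
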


We mention that the notion of CR transversality can be defined also for CR mappings $f\colon M \to M'$ in terms of a formal power series expansion for $f$ (see e.g.\ \cite{ER06}). The analog of Theorem \ref{main} holds for CR mappings whose differentials have generic maximal rank.

In the next section, we will briefly recall some basic definitions and facts. In Section 3, we will present the proof of Theorem~\ref{main}. For further comments, examples and related results regarding this problem, we refer the reader to e.g.\ the  papers \cite{BRgeom} and \cite{ER06}.

\section{Preliminaries}\label{prelim}

In this section, we will recall some basic definitions and facts about real submanifolds in complex spaces. For more detail and proofs of facts stated, we refer the reader to the book \cite{BER99a}. Recall that a real submanifold $M$ of codimension $d$ in $\C^N$ ($\cong\bR^{2N}$) is said to be generic if, for every $p\in M$, the submanifold $M$ is defined locally near $p$ by a defining equation $\rho(Z,\bar Z) = 0$, where $\rho = (\rho_1,\dots \rho_d)$ is a smooth $\bR^d$-valued function satisfying the following condition
\[
\partial\rho_1\wedge\ldots \wedge \partial\rho_d\neq 0.
\]
In particular, a generic submanifold $M$ of codimension $d$ in $\bC^N$ is a CR manifold of CR dimension $n=N-d$. If $M$ is real-analytic and $p\in M$, then there are normal (local) coordinates $Z=(z,w)$, where $z=(z_1,\dots, z_n)$ and $w=(w_1,\dots w_d)$, vanishing at $p$ such that $M$ is defined by the complex equation
\begin{equation}\label{definingequation}
w = Q(z,\bar z,\bar w),
\end{equation}
where  $Q(z,\chi,\tau)$ is a $\C^d$-valued holomorphic function defined in a neighborhood of $p=(0,0)$ satisfying
\begin{equation}\label{normalcondition}
Q(z, 0 ,\tau) \equiv Q(0, \chi, \tau) \equiv \tau.
\end{equation}
The fact that the $d$ complex equations in \eqref{definingequation} define a  submanifold of real codimension $d$ is equivalent to the identity
\begin{equation}\label{reality}
Q(z,\chi, \bar Q(\chi, z ,w)) \equiv w;
\end{equation}
here and in what follows, we use the following notation: if $u(x)$ is an analytic function or formal power series in some set of variables $x$, then $\bar u(x)$ is the function or power series given by $\bar u(x):=\overline{u(\bar x)}$.

A submanifold $M$ is said to be of {\it finite type} (in the sense of Kohn and Bloom-Graham)  at
$p$ if the
(complex) Lie algebra
$\frak g_M$ generated by all smooth CR (or (0,1)) vector fields on $M$ and their conjugates satisfies $\frak
g_M(p)=\C T_{p}M$. Our proof of Theorem \ref{main} will rely partly on a characterization of finite type due to Baouendi, Rothschild and the first author given in \cite{BER96} (see also \cite{BER03}) in term of the generic rank of the iterated Segre mappings; precise details will be given in Section 3. A generic, real-analytic submanifold $M$ is said to be {\it holomorphically nondegenerate} at $p$ if there is no (nontrivial) germ at $p$ of a holomorphic vector field tangent to $M$ near $p$. By a germ at $p$ of a holomorphic vector field shall mean a vector field of the form
\[
L= \sum_{j=1}^N \varphi_j(Z) \frac{\partial }{\partial Z_j}
\]
where the $\varphi_j$ are germs at $p$ of holomorphic functions.

We shall consider the variables $\bar z, \bar w$ in \eqref{definingequation} as an independent set of complex variables $\xi = (\chi,\tau)$ and thus the complexified equation $w = Q(z,\chi,\tau)$ defines a complex submanifold $\mathcal M$ of codimension $d$ in $\C^N_{Z} \times \C^N_{\xi}$. We shall refer to $\mathcal M$ as the complexification of $M$. If $M, M'\subset \bC^N$ are generic, real analytic submanifolds of codimension $d$, given in normal coordinates by $w=Q(z,\bar z, \bar w)$ and $w' = Q'(z',\bar z',\bar w')$, and $H(z,w) = (F(z,w), G(z,w))$ is a holomorphic mapping $(\bC^N,0)\to (\bC^N,0)$ sending $M$ into $M'$, then there is a $d\times d$ matrix $a(z,w,\chi,\tau)$ of holomorphic functions in a neighborhood of the origin in $\bC^N_Z\times\bC^N_\xi$ such that the following identity holds:
\begin{equation}\label{mapping}
G(z,w) - Q'(F(z,w), \bar F(\chi,\tau), \bar G(\chi,\tau)) = a(z,w,\chi,\tau)(w-Q(z,\chi,\tau)).
\end{equation}
Equivalently, the complexified mapping $\mathcal H(z,w,\chi,\tau):=(H(z,w),\bar H(\chi,\tau))$ sends  $\mathcal M$ into $\mathcal M'$ (with the obvious notation). It is straightforward to verify that $H$ is CR transversal to $M'$ at $0$ if and only if $\det a(0)\neq 0$ (cf.\ \cite{BER07}).

It is convenient to relax the convergence properties of the map and defining equations. A formal, generic submanifold $M$ of codimension $d$ through $0$ in $\bC^N$ is defined by a formal equation of the form \eqref{definingequation}, where $Q(z,\chi,\tau)$ is a $\bC^d$-valued power series in $(z,\chi,\tau)\in \bC^n\times\bC^n\times\bC^d$ satisfying the normality condition \eqref{normalcondition} and the reality condition \eqref{reality}. A formal holomorphic mapping $H=(F,G)\colon (\bC^N,0)\to (\bC^N,0)$ (i.e.\ a $\bC^N$-valued power series in $Z=(z,w)$ with no constant term) is said to send the formal submanifold $M$ into a formal submanifold $M'$ if there exists a $d\times d$ matrix $a(z,w,\chi,\tau)$ of formal power series such that \eqref{mapping} holds. If $M$ and $M'$ are smooth, generic submanifolds through $p$ and $p'$ in $\bC^N$ and $H$ is a holomorphic mapping $(\bC^N,p)\to (\bC^N,p')$ (or a smooth CR mapping defined on $M$) sending $M$ into $M'$, then one can associate to $M$ and $M'$ formal manifolds, still denoted by $M$ and $M'$, through $0$ and a formal holomorphic mapping, also denoted by $H\colon (\bC^N,0)\to (\bC^N,0)$, sending $M$ into $M'$; the reader is e.g.\ referred to \cite{BER99b} for this (fairly obvious) construction. It is also straightforward to verify that the holomorphic mapping $H$ sending the smooth manifold $M$ into the smooth manifold $M'$ is CR transversal to $M'$ at $p$ if and only if the formal manifolds and mapping satisfy \eqref{mapping} with $\det a(0)\neq0$. Thus, to prove Theorem \ref{main} it suffices to consider formal manifolds $M$ and $M'$ through $0$ in $\bC^N$, a formal holomorphic mapping $H\colon (\bC^N,0)\to (\bC^N,0)$ sending $M$ into $M'$, i.e.\ satisfying \eqref{mapping}, and prove that the matrix $a(z,w,\chi,\tau)$ in \eqref{mapping} satisfies $\det a(0)\neq0$; the reader is also referred to \cite{ER06} for more details on this reduction to the formal case.
In what  follows, we shall consider only formal mappings and formal generic submanifolds.

\section{Proof of the main theorem and corollaries}
In this section, $M$ and $M'$ are generic formal submanifolds through 0, given in normal coordinates by $w=Q(z,\bar z, \bar w)$ and $w' = Q'(z',\bar z',\bar w')$,  and $H(z,w) = (F(z,w), G(z,w))$ a formal mapping sending $M$ into $M'$. We may consider $H$ and $\bar H$ as formal maps from $\C^N_Z\times C^N_\xi$ in the obvious way:
\begin{align*}
&H(z,w,\chi,\tau) = H(z,w)\\
&\bar H(z,w,\chi,\tau) = \bar H(\chi,\tau)
\end{align*}
We begin with the following lemma.

\begin{lemma}\label{matrixidentity} Let $M$ and $M'$ be generic formal submanifolds given in normal coordinates by $w=Q(z,\bar z, \bar w)$ and $w' = Q'(z',\bar z',\bar w')$ and $H=(F,G):(\C^N, 0 )\to (\C^N, 0)$ a formal mapping sending $M$ into $M'$, that is
\begin{equation}\label{mainequation}
G(z,w) - Q'(F(z,w), \bar F(\chi,\tau), \bar G(\chi,\tau)) = a(z,w,\chi,\tau)(w-Q(z,\chi,\tau)),
\end{equation}
where $a(z,w,\chi,\tau)$ is a $d\times d$ matrix of formal power series in $\C[[z,w,\chi,\tau]]$. Then, there are $d\times d$ matrices $C(z,\chi,\tau)$ and $E(z,\chi,\tau)$ of formal power series in $\C[[z,\chi,\tau]]$ such that the following holds
\begin{align}\label{a0}
\det H_Z(z,Q(z,\chi,\tau)) I_d &= a(z,Q(z,\chi,\tau),\chi,\tau) C(z,\chi,\tau)\\
\det \bar H_\xi(\chi,\tau)  I_d &= a(z,Q(z,\chi,\tau),\chi,\tau) E(z,\chi,\tau) \label{b0}
\end{align}
where $I_d$ denotes the $d\times d$ identity matrix.
\end{lemma}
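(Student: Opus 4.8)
The plan is to restrict the mapping equation \eqref{mainequation} to the complexification $\mathcal M$, i.e.\ to set $w=Q(z,\chi,\tau)$, which kills the right-hand side, and to recover the matrix $a(z,Q,\chi,\tau)$ by one differentiation. Writing $r:=G-Q'(F,\bar F,\bar G)$ so that $r=a\,(w-Q)$, and differentiating in $w$ before restricting to $w=Q$, one gets $a(z,Q,\chi,\tau)=G_w-Q'_{z'}F_w$, where $Q'_{z'},Q'_{\chi'},Q'_{\tau'}$ denote the derivatives of $Q'$ in its first, second and third slots, evaluated at $(F(z,Q),\bar F(\chi,\tau),\bar G(\chi,\tau))$. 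The two identities will then come from differentiating the basic identity $G(z,Q)=Q'(F(z,Q),\bar F(\chi,\tau),\bar G(\chi,\tau))$ in the $z$-direction and in the $\xi=(\chi,\tau)$-direction respectively; throughout, every derivative is understood to be evaluated on $\mathcal M$.

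For \eqref{a0}, I would differentiate the basic identity in $z$ to obtain $G_z-Q'_{z'}F_z=-a\,Q_z$ on $\mathcal M$. Combined with $a=G_w-Q'_{z'}F_w$, this says precisely that subtracting $Q'_{z'}$ times the top block row of $H_Z=\begin{pmatrix}F_z&F_w\\G_z&G_w\end{pmatrix}$ from the bottom block row produces $\begin{pmatrix}F_z&F_w\\-aQ_z&a\end{pmatrix}$. Hence, after left-multiplying $H_Z$ by the unimodular matrix $\begin{pmatrix}I_n&0\\-Q'_{z'}&I_d\end{pmatrix}$ and factoring,
\[
\begin{pmatrix}I_n&0\\-Q'_{z'}&I_d\end{pmatrix}H_Z
=\begin{pmatrix}I_n&0\\0&a\end{pmatrix}
\begin{pmatrix}F_z&F_w\\-Q_z&I_d\end{pmatrix}.
\]
Taking determinants gives the scalar identity $\det H_Z=\det a\cdot\det B$ with $B:=\begin{pmatrix}F_z&F_w\\-Q_z&I_d\end{pmatrix}$, and the adjugate identity $a\operatorname{adj}(a)=\det a\,I_d$ then yields \eqref{a0} with $C:=\det B\cdot\operatorname{adj}(a)$, which has entries in $\C[[z,\chi,\tau]]$ and involves no division.

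For \eqref{b0}, differentiating the basic identity in $\xi$ produces the chain-rule relation $a\,Q_\xi=Q'_{\xi'}\,\bar H_\xi$ on $\mathcal M$, where $Q_\xi=(Q_\chi\mid Q_\tau)$ and $Q'_{\xi'}=(Q'_{\chi'}\mid Q'_{\tau'})$ are $d\times N$; here $\bar H_\xi$ enters as a genuine $N\times N$ factor because the $\xi$-component of the complexified map is $\xi'=\bar H(\xi)$. The idea is to multiply on the right by its adjugate,
\[
a\,Q_\xi\operatorname{adj}(\bar H_\xi)
=Q'_{\xi'}\,\bar H_\xi\operatorname{adj}(\bar H_\xi)
=\det\bar H_\xi\cdot Q'_{\xi'}.
\]
Reading off the last $d$ columns (the $\tau'$-block) gives $\det\bar H_\xi\cdot Q'_{\tau'}=a\,W$, where $W$ is the corresponding $d\times d$ block of $Q_\xi\operatorname{adj}(\bar H_\xi)$. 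Finally, the normalization $Q'(z',0,\tau')\equiv\tau'$ forces $Q'_{\tau'}(0)=I_d$, so $Q'_{\tau'}$ is invertible in $\C[[z,\chi,\tau]]$, and right-multiplying by its inverse yields \eqref{b0} with $E:=W\,(Q'_{\tau'})^{-1}$.

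The main obstacle, and the only genuinely asymmetric point, is \eqref{b0}: one must recover the \emph{same} matrix $a$ on the right, rather than the companion matrix coming from the complex-conjugate defining equation. The block-reduction trick used for \eqref{a0} is unavailable here, since differentiating in $\xi$ pairs the full Jacobian $\bar H_\xi$ only against the rectangular matrices $Q_\xi,Q'_{\xi'}$ and not against a square system; the adjugate of $\bar H_\xi$ is what converts this rectangular relation into a determinant, while the normal-coordinate normalization of $Q'$ is exactly what makes the $\tau'$-block $Q'_{\tau'}$ invertible so that $a$ can be isolated on the right. I would verify at each stage that, after the restriction $w=Q$, all the factors $a$, $B$, $\operatorname{adj}(\bar H_\xi)$, $Q_\xi$ and $(Q'_{\tau'})^{-1}$ are power series in $(z,\chi,\tau)$ alone, so that $C$ and $E$ indeed lie in $\C[[z,\chi,\tau]]$.
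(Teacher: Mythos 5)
Your proof is correct, and for \eqref{b0} it is essentially identical to the paper's argument: the same relation $Q'_{\xi'}\,\bar H_\xi = a\,Q_\xi$ on $\mathcal M$, right-multiplication by the adjugate of $\bar H_\xi$, extraction of the last $d$ (i.e.\ $\tau'$-block) columns, and inversion of $Q'_{\tau'}(\Phi)$ using the normality condition \eqref{normalcondition}. The only place you deviate is \eqref{a0}: the paper keeps the rectangular relation $(-Q'_{z'},I_d)\,H_Z(\Psi)=a(\Psi)\,(-Q_z,I_d)$, multiplies on the right by $\operatorname{adj}(H_Z)$, and reads off the last $d$ columns, whereas you repackage the same two differentiated identities into the square block factorization
\[
\begin{pmatrix}I_n&0\\-Q'_{z'}&I_d\end{pmatrix}H_Z(\Psi)
=\begin{pmatrix}I_n&0\\0&a(\Psi)\end{pmatrix}
\begin{pmatrix}F_z&F_w\\-Q_z&I_d\end{pmatrix}
\]
and then use $\operatorname{adj}(a)$ to produce $C=\det B\cdot\operatorname{adj}(a)$, where $B$ denotes the rightmost factor. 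Both are valid, and your version has the minor advantage of yielding the scalar identity $\det H_Z(\Psi)=\det a(\Psi)\cdot\det B$ on the nose, which is slightly sharper than what the paper later extracts from \eqref{a0} in Lemma \ref{divisor} (there one only concludes that $\det a(\Psi)$ divides $(\det H_Z(\Psi))^d$); since that sharper form is not needed anywhere, the two routes are interchangeable for the purposes of the paper.
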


\begin{proof} Differentiating \eqref{mainequation} with respect to $w$ and substituting $w=Q(z,\chi,\tau)$ we get
\begin{multline}\label{eq2}
G_w(z,Q(z,\chi,\tau))-  Q'_{z'}(F(z,Q(z,\chi,\tau),\bar F(\chi,\tau),\bar G(\chi,\tau))
 F_w(z,Q(z,\chi,\tau)) =\\ a(z,Q(z,\chi,\tau),\chi,\tau).
\end{multline}
To make our formulas more compact, we will use the notation $Z=(z,w)$ and $\xi = (\chi,\tau)$. We shall denote by $\Psi$ a parametrization of $\mathcal M$, i.e.\
\[
\Psi (z,\chi,\tau) = (z,Q(z,\chi,\tau), \chi,\tau).
\]
We shall also use the notation
\[
\Phi(z,\chi,\tau) = (F(z,Q(z,\chi,\tau)), \bar F(\chi,\tau), \bar G(\chi,\tau)).
\]
Thus, \eqref{eq2} can be written as
\begin{equation}\label{eq3}
G_w(\Psi) - Q'_{z'}(\Phi) F_w(\Psi) = a(\Psi).
\end{equation}
Similarly, we may differentiate \eqref{mainequation} with respect to $z$ and substitute $w=Q(z,\chi,\tau)$ to get
\begin{multline}
G_z(z,Q(z,\chi,\tau)) -  Q'_{z'}(F(z,Q(z,\chi,\tau),\bar F(\chi,\tau),\bar G(\chi,\tau))  F_z(z,Q(z,\chi,\tau))\\ = a(z,Q(z,\chi,\tau),\chi,\tau)(-Q_z(z,\chi,\tau)),
\end{multline}
or, in the notation above,
\begin{equation}\label{eq4}
G_z(\Psi) - Q'_{z'}(\Phi ) F_z(\Psi) = a(\Psi)\cdot (- Q_z).
\end{equation}
The equations \eqref{eq3} and \eqref{eq4} can be combined into the following equation
\begin{equation}\label{eq5}
(-Q'_{z'}(\Phi), I_d)
 \begin{bmatrix} F_z (\Psi) & F_w(\Psi) \\
 			G_z(\Psi) & G_w(\Psi)
\end{bmatrix} = a(\Psi)  ( - Q_z, I_d).
\end{equation}
Let $V':=( -Q'_{z'}(\Phi), I_d)$, $V := (- Q_z, I_d)$, and let $H_Z$ be the Jacobian matrix of $H$. Then the equation \eqref{eq5} can be written as follows:
\begin{equation}\label{eq6}
V'H_Z(\Psi) = a(\Psi) V.
\end{equation}
By Cramer's rule, there is an $N\times N$ matrix of formal power series $B(z,w)$  such that $H_Z B = BH_Z = (\det H_Z) I_N$. Thus, it follows from equation \eqref{eq6} that
\begin{equation}\label{eq7}
\det H_Z(\Psi)V' = a(\Psi) VB(\Psi).
\end{equation}
By considering the last $d$ columns of equation \eqref{eq7}, we find that there is a $d\times d$ matrix $C = C(z,\chi,\tau)$ whose entries are power series such that
\begin{equation*}
 \det H_Z(\Psi) I_d  =a(\Psi)C.
\end{equation*}
Thus,  equation \eqref{a0} is proved.

Now, to prove \eqref{b0}, we differentiate \eqref{mainequation} with respect to $\chi$ and substitute $w=Q(z,\chi,\tau)$:
\begin{multline}
Q'_{\chi'}(F(z,Q(z,\chi,\tau),\bar F(\chi,\tau),\bar G(\chi,\tau))  \bar F_\chi(\chi,\tau)
+ \\Q'_{\tau'}(F(z,Q(z,\chi,\tau),\bar F(\chi,\tau),\bar G(\chi,\tau)) \bar G_\chi(\chi,\tau)
= a(z,Q(z,\chi,\tau),\chi,\tau)  Q_\chi(z,\chi,\tau).
\end{multline}
In other words,
\begin{equation}\label{eq8}
Q'_{\chi'}(\Phi) \bar F_\chi (\Psi)+ Q'_{\tau'}(\Phi) \bar G_\chi (\Psi) = a(\Psi) Q_\chi.
\end{equation}
Similarly, we differentiate \eqref{mainequation} with respect to $\tau$ and substitute $w=Q(z,\chi,\tau)$:
\begin{equation}\label{eq9}
Q'_{\chi'}(\Phi) \bar F_\tau (\Psi) + Q'_{\tau'}(\Phi) \bar G_\tau(\Psi) = a(\Psi) Q_\tau.
\end{equation}
The equations \eqref{eq8} and \eqref{eq9} can be combined into the following equation
\begin{equation}\label{eq9a}
(Q'_{\chi'}(\Phi ), Q'_{\tau'}(\Phi ))
 \begin{bmatrix} \bar F_\chi (\Psi) & \bar F_\tau(\Psi) \\
 			\bar G_\chi(\Psi) & \bar G_\tau(\Psi)
\end{bmatrix} = a(\Psi) (Q_\chi, Q_\tau).
\end{equation}
Let $W = (Q_\chi, Q_\tau)$ and $W' = (Q'_{\chi'}(\Phi ), Q'_{\tau'}(\Phi ))$. It follows from \eqref{eq8} and \eqref{eq9} that
\begin{equation}\label{eq10}
W' \bar H_\xi (\Psi)  = a(\Psi) W.
\end{equation}
Note that $\bar H_\xi(\chi,\tau) \bar B(\chi,\tau) = \det \bar H_\xi(\chi,\tau)I_N  $, where $B(z,w)$ is the matrix introduced above. Multiplying both sides of \eqref{eq10} with $\bar B(\Psi)$, we obtain
\begin{equation}\label{eq11}
W' \det \bar H_\xi(\Psi) = a(\Psi) W\bar B(\Psi).
\end{equation}
Taking the last $d$ columns of \eqref{eq11}, we obtain
\begin{equation}\label{eq12}
\det \bar H_\xi(\Psi) Q'_{\tau'}(\Phi)= a(\Psi) D,
\end{equation}
where $D=D(z,\chi,\tau)$ is the matrix formed by the last $d$ columns of $W\bar B(\Psi)$.
Now, it follows from \eqref{normalcondition} that $Q'_{\tau'}(0,0,0) = I_d$ and hence  $Q'_{\tau'}(\Phi(z,\chi,\tau))$ is invertible over the ring $\C[[z,\chi,\tau]]$. Consequently, it follows from \eqref{eq12} that there is a matrix $E=E(z,\chi,\tau)$ such that
\begin{equation*}
\det \bar H_\xi(\Psi) I_d = a(\Psi) E,
\end{equation*}
which is \eqref{b0}.
The proof is complete.
\end{proof}

\begin{lemma}\label{divisor} Assume that $\det H_Z(0)=0$, but $\det H_Z(z,w) \not\equiv 0$. Then there exist units $u(z,w,\chi)$, $v(z,\chi,\tau)$ in $\C[[z,w,\chi,\tau]]$ and formal power series $b(z,w)\in \C[[z,w]]$, $c(\chi,\tau) \in \C[[\chi,\tau]]$ such that
\begin{align}
\label{div1}\det a(z,w,\chi,\bar Q(\chi, z,w)) &= u(z,w,\chi) b(z,w).\\ \label{div2}
\det a(z,Q(z,\chi,\tau),\chi,\tau)) &= v(z,\chi,\tau) c(\chi,\tau).
\end{align}
Furthermore, $b(z,w)$ is a divisor of $(\det H_z(z,w))^d$ in $\C[[z,w]]$ and $c(\chi,\tau)$ is a divisor of $(\det \bar H_\xi(\chi,\tau))^d$ in the ring $\C[[\chi,\tau]]$.
\end{lemma}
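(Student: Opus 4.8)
The plan is to obtain both divisor relations from the matrix identities \eqref{a0} and \eqref{b0} of Lemma \ref{matrixidentity} by passing to determinants, and then to invoke an elementary divisibility fact for rings of formal power series. First observe that since $\det H_Z(z,w)\not\equiv 0$, its conjugate $\det\bar H_\xi(\chi,\tau)$ is not identically zero either; this is all I will use about the nonvanishing hypothesis.

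For \eqref{div2} I would simply take the determinant of both sides of \eqref{b0}. Writing $\Psi=(z,Q(z,\chi,\tau),\chi,\tau)$ as in Lemma \ref{matrixidentity}, this gives
\begin{equation*}
(\det\bar H_\xi(\chi,\tau))^d=\det a(\Psi)\,\det E(z,\chi,\tau),
\end{equation*}
so that $\det a(z,Q(z,\chi,\tau),\chi,\tau)$ divides $(\det\bar H_\xi(\chi,\tau))^d$ in $\C[[z,\chi,\tau]]$; the crucial point is that the right-hand side lies in the subring $\C[[\chi,\tau]]$, i.e.\ it does not depend on $z$. For \eqref{div1}, rather than re-differentiating, I would substitute $\tau=\bar Q(\chi,z,w)$ into \eqref{a0}. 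By the reality condition \eqref{reality} one has $Q(z,\chi,\bar Q(\chi,z,w))\equiv w$, so the left-hand side becomes $\det H_Z(z,w)\,I_d$ and the argument of $a$ becomes $(z,w,\chi,\bar Q(\chi,z,w))$. Taking determinants yields
\begin{equation*}
(\det H_Z(z,w))^d=\det a(z,w,\chi,\bar Q(\chi,z,w))\,\det\widetilde C(z,w,\chi),
\end{equation*}
with $\widetilde C(z,w,\chi)=C(z,\chi,\bar Q(\chi,z,w))$; hence $\det a(z,w,\chi,\bar Q(\chi,z,w))$ divides $(\det H_Z(z,w))^d$ in $\C[[z,w,\chi]]$, and again the divisor on the right lies in the subring $\C[[z,w]]$ and is free of $\chi$. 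In both cases the relevant determinant of $a$ is nonzero, since it divides a nonzero element.

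Both conclusions then reduce to the following algebraic statement, which I regard as the heart of the matter: if $S\subset R$ are the power series rings $\C[[\mathbf y]]\subset\C[[\mathbf x,\mathbf y]]=S[[\mathbf x]]$, if $0\neq g\in S$, and if $f\in R$ divides $g$ in $R$, then $f=u\,h$ for a unit $u\in R$ and some $h\in S$ dividing $g$ in $S$. To prove this I would use that $R$ and $S$ are unique factorization domains together with the observation that every prime $p\in S$ remains prime in $R$: indeed $R/pR\cong(S/pS)[[\mathbf x]]$, which is a domain because $S/pS$ is. Factoring $g=c\prod_i p_i^{e_i}$ into primes in $S$, this is at the same time a prime factorization in $R$ (units and primes of $S$ stay such in $R$), so the divisor $f$ of $g$ in the UFD $R$ must equal a unit times $\prod_i p_i^{f_i}$ with $0\le f_i\le e_i$, and $h:=\prod_i p_i^{f_i}\in S$ does the job.

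Applying this with $(\mathbf x,\mathbf y)=(z,(\chi,\tau))$ and $g=(\det\bar H_\xi)^d$ gives \eqref{div2} with $c(\chi,\tau)=h$, while $(\mathbf x,\mathbf y)=(\chi,(z,w))$ and $g=(\det H_Z)^d$ gives \eqref{div1} with $b(z,w)=h$; the asserted units $v$ and $u$ are the corresponding units of $R$, which are units of $\C[[z,w,\chi,\tau]]$ since they have nonzero constant term. The main obstacle, and the only nonformal ingredient, is isolating and proving this divisibility lemma correctly—in particular the stability of primes under the inclusion $S\hookrightarrow S[[\mathbf x]]$—as the determinant identities themselves are immediate once the reality condition is used to switch between the two parametrizations of $\mathcal M$.
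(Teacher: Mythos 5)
Your proposal is correct and follows essentially the same route as the paper: both derive the identities $(\det H_Z(z,w))^d=\det a(z,w,\chi,\bar Q(\chi,z,w))\det C(z,\chi,\bar Q(\chi,z,w))$ and $(\det\bar H_\xi(\chi,\tau))^d=\det a(z,Q(z,\chi,\tau),\chi,\tau)\det E(z,\chi,\tau)$ from \eqref{a0} and \eqref{b0} via the reality condition \eqref{reality}, and then conclude by unique factorization in $\C[[z,w,\chi]]$ (resp.\ $\C[[z,\chi,\tau]]$). The only difference is one of rigor, in your favor: the paper simply asserts that the factorization of the left-hand side ``involves factors that are power series in $z$ and $w$ only,'' whereas you isolate and prove the underlying algebraic fact that primes of $S=\C[[\mathbf y]]$ remain prime in $S[[\mathbf x]]$ (via $S[[\mathbf x]]/pS[[\mathbf x]]\cong(S/pS)[[\mathbf x]]$), so divisors in the big ring of an element of $S$ are units times divisors in $S$.
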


\begin{proof} It follows from \eqref{a0} that
\begin{equation*}
(\det H_Z(\Psi))^d = \det a(\Psi)\det C.
\end{equation*}
If we substitute $\tau= \bar Q(\chi, z,w)$ and use the identity \eqref{reality}, we obtain
\begin{equation}\label{eq13}
(\det H_Z(z,w))^d = \det a(z,w,\chi,\bar Q(\chi, z, w))\det C(z,\chi,\bar Q(\chi, z, w)).
\end{equation}
We now factor both sides of \eqref{eq13} into products of irreducible elements in the unique factorization domain $\C[[z,w,\chi]]$. Since the left hand side of \eqref{eq13} is a non-trivial formal power series in the ring $\C[[z,w]]\subset\C[[z,w,\chi]]$, its factorization involves factors that are power series in $z$ and $w$ only. Thus, by the uniqueness of the factorization, we obtain
\begin{equation*}
\det a(z,w,\chi,\bar Q(\chi, z, w)) = u(z,w,\chi) b(z,w).
\end{equation*}
where $b(z,w) \in \C[[z,w]]$ is a divisor of $(\det H_Z(z,w))^d$ and $u(z,\chi,\tau)$ is a unit in $\C[[z,w,\chi]]$.

Similarly, it follows from \eqref{b0} that
\begin{equation*}
(\det \bar H_\xi(\chi,\tau))^d= \det a(z,Q(z,\chi,\tau),\chi,\tau))  \det E(z,\chi,\tau).
\end{equation*}
A similar argument to the one above shows that
\begin{equation*}
\det a(z,Q(z,\chi,\tau),\chi,\tau)) = v(z,\chi,\tau) c(\chi,\tau),
\end{equation*}
where $v(z,\chi,\tau) \in \C[[z,\chi,\tau]]$ is a unit and $c(\chi,\tau) \in \C[[\chi,\tau]]$ is a divisor of $(\det \bar H_\xi(\chi,\tau))^d$. The proof is complete.
\end{proof}

\begin{lemma}\label{recursion} Let $u,v,b,c$ be as in Lemma~$\ref{divisor}$. Then, there is a unit $s(z,\chi,\tau)$ in $\C[[z,\chi,\tau]]$ such that
\begin{equation}\label{eq16}
b(z,Q(z,\chi,\tau)) = s(z,\chi,\tau) c(\chi,\tau).
\end{equation}
\end{lemma}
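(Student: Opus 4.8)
The plan is to produce \eqref{eq16} by substituting $w=Q(z,\chi,\tau)$ into the first factorization \eqref{div1} and then comparing the result with the second factorization \eqref{div2}. The underlying point is that both of these record the \emph{same} quantity, namely $\det a$ evaluated on a parametrization of the complexification, but written in different ``independent'' variables; the substitution $w=Q(z,\chi,\tau)$ should move the two factorizations onto common ground, after which \eqref{eq16} falls out by cancellation of units.

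First I would record the needed consequence of the reality condition. Applying the bar operation to \eqref{reality} and relabeling variables should yield the companion identity
\begin{equation*}
\bar Q(\chi, z, Q(z,\chi,\tau)) \equiv \tau,
\end{equation*}
which says that $\tau\mapsto Q(z,\chi,\tau)$ and $w\mapsto \bar Q(\chi,z,w)$ are formal inverses of one another in their last slot. (The substitution $w=Q(z,\chi,\tau)$ is admissible in the formal ring because $Q(0,0,0)=0$ by \eqref{normalcondition}, so $Q(z,\chi,\tau)$ has no constant term.) Substituting $w=Q(z,\chi,\tau)$ into \eqref{div1}, the fourth argument $\bar Q(\chi,z,w)$ of $a$ becomes exactly $\tau$ by this identity, while the second argument becomes $Q(z,\chi,\tau)$; hence the left-hand side becomes $\det a(z,Q(z,\chi,\tau),\chi,\tau)$ and the right-hand side becomes $u(z,Q(z,\chi,\tau),\chi)\,b(z,Q(z,\chi,\tau))$. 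By \eqref{div2} the left-hand side also equals $v(z,\chi,\tau)\,c(\chi,\tau)$, and equating the two expressions gives
\begin{equation*}
v(z,\chi,\tau)\,c(\chi,\tau) = u(z,Q(z,\chi,\tau),\chi)\,b(z,Q(z,\chi,\tau)).
\end{equation*}

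It then remains only to see that the conversion factor is a unit. Since $v$ is a unit in $\C[[z,\chi,\tau]]$ and $u$ is a unit (its constant term $u(0,0,0)$ is nonzero), the composite $u(z,Q(z,\chi,\tau),\chi)$ has the same nonvanishing constant term and hence is a unit in $\C[[z,\chi,\tau]]$; thus $s:=v\,/\,u(z,Q(z,\chi,\tau),\chi)$ is a unit, and rearranging gives \eqref{eq16}. I expect the only delicate point to be the bookkeeping in the first step: correctly deriving the conjugate reality identity $\bar Q(\chi,z,Q(z,\chi,\tau))\equiv\tau$ from \eqref{reality} via the bar operation, and confirming that the formal substitution is legitimate. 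Everything afterward is a matter of matching units, which is routine.
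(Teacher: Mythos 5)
Your proposal is correct and follows essentially the same route as the paper's own proof: substitute $w=Q(z,\chi,\tau)$ into \eqref{div1}, use the reality identity \eqref{reality} (in its equivalent conjugate form $\bar Q(\chi,z,Q(z,\chi,\tau))\equiv\tau$) to recognize the left side as $\det a(z,Q(z,\chi,\tau),\chi,\tau)$, compare with \eqref{div2}, and set $s=(u(z,Q(z,\chi,\tau),\chi))^{-1}v(z,\chi,\tau)$. Your extra care in deriving the conjugate reality identity and checking admissibility of the formal substitution is sound bookkeeping that the paper leaves implicit.
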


\begin{proof}
We substitute $w = Q(z,\chi, \tau)$ into \eqref{div1}, and use \eqref{reality} and \eqref{div2} to obtain
\begin{align}
u(z,Q(z,\chi,\tau),\chi)\cdot b(z,Q(z,\chi,\tau)) \notag
& = \det a(z,Q(z,\chi,\tau),\chi,\tau))\\
&= v(z,\chi,\tau)  c(\chi,\tau).
\end{align}
Since $u$ and $v$ are units, we can take $s(z,\chi,\tau) = (u(z,Q(z,\chi,\tau),\chi))^{-1}v(z,\chi,\tau)$ to obtain \eqref{eq16}. It is obvious that $s$ is also a unit. The proof is complete.
\end{proof}
We will need the following lemma whose proof may be found, e.g., in \cite{BER99a}, Proposition 5.2.3.
\begin{lemma}\label{genericrank}
Let $K:(\C^N, 0 )\to (\C^k,0)$ and $\phi : (\C^m, 0 )\to (\C^N,0)$ be formal mappings. Assume that $\phi$ has generic rank $N$. If $K \circ \phi \equiv 0$ then $K\equiv 0$.
\end{lemma}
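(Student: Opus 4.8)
The plan is to reinterpret the hypothesis $K\circ\phi\equiv 0$ as the statement that the pullback (substitution) homomorphism $\phi^{*}\colon\C[[Z]]\to\C[[x]]$, with $Z\in\C^{N}$ and $x\in\C^{m}$, annihilates each component $K_{i}$, and to prove the equivalent but cleaner assertion: \emph{if $\phi$ has generic rank $N$ and $P\in\C[[Z]]$ satisfies $P\circ\phi\equiv 0$, then $P\equiv 0$.} Applying this to each component $K_{i}$ yields the lemma. I will argue by induction on $N$, the case $N=1$ being immediate: there $\phi\not\equiv 0$, so $\phi$ is nonzero in the integral domain $\C[[x]]$, and factoring $P=Z^{\ell}V$ with $V$ a unit gives $P\circ\phi=\phi^{\ell}\,V(\phi)\not\equiv0$.

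For the inductive step I first reduce to the case that $P$ is irreducible: since $\C[[Z]]$ is a UFD and $\C[[x]]$ is an integral domain, some irreducible factor of $P$ must already be annihilated by $\phi^{*}$. A nonzero $P$ with $P\circ\phi\equiv0$ must vanish at $0$ (otherwise $P\circ\phi$ has nonzero constant term), so after a generic $\C$-linear change of coordinates in the target I may assume $P$ is regular in $Z_{N}$; Weierstrass preparation then writes $P=U\,W$, where $U$ is a unit and $W\in\C[[Z']][Z_{N}]$ is a Weierstrass polynomial in $Z_{N}$, with $Z'=(Z_{1},\dots,Z_{N-1})$, and $W$ is irreducible because $P$ is. As $U\circ\phi$ is a unit in $\C[[x]]$, the relation becomes $W\circ\phi\equiv0$. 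A linear change preserves generic rank, and deleting one row from a matrix of (generic) rank $N$ leaves rank exactly $N-1$; hence $\phi'=(\phi_{1},\dots,\phi_{N-1})$ has generic rank $N-1$, which is what the induction hypothesis will be applied to.

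The heart of the argument is differentiation. Differentiating $W\circ\phi\equiv 0$ with respect to each source variable $x_{j}$ yields $g\cdot J\phi\equiv 0$, where $g:=\big(W_{Z_{1}}(\phi),\dots,W_{Z_{N}}(\phi)\big)$ and $J\phi$ is the Jacobian matrix of $\phi$. If $g\not\equiv 0$, then $g$ is a nonzero left null vector of $J\phi$ over the fraction field of $\C[[x]]$, which forces the generic rank of $\phi$ below $N$, a contradiction. So everything reduces to showing $g\not\equiv 0$, and for this it suffices that its last entry $W_{Z_{N}}(\phi)$ be nonzero. This is exactly where I expect the main obstacle: high multiplicity could in principle make $W_{Z_{N}}(\phi)$ vanish. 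I resolve it through separability. Because $\mathrm{char}\,\C=0$ and $W$ is irreducible over the fraction field of $\C[[Z']]$, the discriminant $\delta:=\mathrm{Res}_{Z_{N}}(W,W_{Z_{N}})\in\C[[Z']]$ is nonzero and lies in the ideal generated by $W$ and $W_{Z_{N}}$, so $\delta=A\,W+B\,W_{Z_{N}}$ for some $A,B\in\C[[Z']][Z_{N}]$. Evaluating this at $\phi$, the vanishing of $W(\phi)$ together with a hypothetical vanishing of $W_{Z_{N}}(\phi)$ would give $\delta\circ\phi'\equiv0$ with $\delta\not\equiv0$; since $\phi'$ has generic rank $N-1$, the induction hypothesis then forces $\delta\equiv0$, a contradiction. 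Hence $W_{Z_{N}}(\phi)\not\equiv0$, so $g\not\equiv0$, and the rank contradiction closes the induction.
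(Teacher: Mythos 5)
Your proof is correct, but there is nothing in the paper to compare it against: the paper does not prove this lemma at all, it simply cites Proposition 5.2.3 of \cite{BER99a}. So your argument should be judged as a self-contained replacement for that citation, and as such it holds up. The structure — reduce to a scalar $P$, then to an irreducible $P$, make $P$ regular in $Z_N$ by a generic linear change (harmless, since linear changes of the target preserve generic rank and irreducibility), apply formal Weierstrass preparation, and close an induction on $N$ via the differentiated identity $g\cdot J\phi\equiv 0$ — is the standard algebraic proof that a formal map of generic full rank induces an injective pullback (equivalently, that its components are analytically independent), in the spirit of the cited reference. The crux is exactly where you said it is: ruling out $W_{Z_N}(\phi)\equiv 0$, and your separability argument does this correctly. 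Since $W$ is a monic Weierstrass polynomial, irreducibility of $P$ in $\C[[Z]]$ passes (by the Weierstrass factorization plus Gauss's lemma) to irreducibility of $W$ over $\mathrm{Frac}\,(\C[[Z']])$; in characteristic zero this forces $\gcd(W,W_{Z_N})=1$, so $\delta=\mathrm{Res}_{Z_N}(W,W_{Z_N})\in\C[[Z']]$ is nonzero, and the identity $\delta=AW+BW_{Z_N}$ converts the hypothetical vanishing of both $W(\phi)$ and $W_{Z_N}(\phi)$ into $\delta\circ\phi'\equiv 0$, which the inductive hypothesis forbids — the application is legitimate because deleting one row from an $N\times m$ matrix of generic rank $N$ leaves generic rank exactly $N-1$. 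The facts you invoke without proof (formal Weierstrass preparation, transfer of irreducibility to $\C[[Z']][Z_N]$, the resultant lying in the ideal generated by the two polynomials, and its nonvanishing when they are coprime) are all standard, so the proof is complete; the only stylistic caveat is that you could state explicitly that the induction hypothesis quantifies over all source dimensions $m$, since it is applied to $\phi'$ with the same $m$.
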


We shall now prove our main result.
\begin{proof}[Proof of Theorem~$\ref{main}$]
As explained in Section 2, we may assume that $M$ and $M'$ are formal manifolds through $0\in\C^N$, $H=(F,G)\colon (\C^N,0)\to(\C^N,0)$ a formal mapping satisfying \eqref{mapping}, and then to prove Theorem \ref{main}, it suffices to  prove that the matrix $a(z,w,\chi,\tau)$ in \eqref{mapping} satisfies $\det a(0)\neq0$. Thus, we assume, in order to reach a contradiction, that $\det a(0)=0$.
We deduce from \eqref{div2} that
\begin{equation*}
v(0,0,0)\cdot c(0,0) = \det a(0) = 0.
\end{equation*}
Thus, $c(0,0) =0 $ since $v(z,\chi,\tau)$ is a unit. Setting $\chi =0 $ and $\tau=0$ in equation \eqref{eq16} yields
\begin{align}\label{eq16a}
 b(z,0) = s(z,0,0)\cdot c(0,0) = 0.
\end{align}
Thus, it follows from \eqref{div1} that
\begin{equation}\label{first}
\det a(z,0,\chi, \bar Q(\chi, z, 0))= u(z, 0, \chi)\cdot b(z,0) = 0.
\end{equation}
By taking determinants on both sides of equation \eqref{b0}, substituting $\tau=\bar Q(\chi,z,0)$, and using \eqref{reality}, we conclude that
\begin{align}\label{eq17}
(\det \bar H_\xi&(\chi, \bar Q(\chi, z, 0)))^d \\ \notag
&= \det a(z,0,\chi, \bar Q(\chi, z, 0)) \det E(z, \chi, \bar Q(\chi, z, 0)) \equiv 0.
\end{align}
 In the hypersurfaces case, the proof is complete. Indeed,  if $M$ is finite type at $0$, then the map $(z,\chi)\mapsto (\chi, \bar Q(\chi, z, 0))$ has rank $n+1$ (see e.g \cite{BER96}) and thus, by Lemma~\ref{genericrank}, equation \eqref{eq17} implies that $\det \bar H_\xi \equiv 0$. This is a contradiction.

For general case, we will need the iterated Segre mappings introduced in \cite{BER96} (see also \cite{BER03}, \cite{ER06}). For a positive integer $k$, the {\it $k$th Segre mapping} of $M$ at $0$ is the
mapping $v^k: \C^{kn}\to\C^N$ defined by
\begin{equation}\label{e:k-segre}
\C^{kn}\ni t=(t^1,\ldots,t^k)\mapsto v^k(t):=(t^k,u^k(t^1,\ldots,t^k)),
\end{equation}
where $u^k:\C^{kn}\to \C^d$ is given inductively by
\begin{equation}\label{e:uk}
u^1(t^1)=0, \ u^k(t^1,\ldots,t^k)=Q(t^k,
t^{k-1},\overline{ u^{k-1}}(t^1,\ldots,t^{k-1})),\ k
\ge 2.
\end{equation}
The crucial property of the Segre mappings needed here is the result (see \cite{BER96}, \cite{BER99a}, \cite{BER99b}, \cite{BER03}) that $M$ is of finite type at $0$ if and only if the maps $v^{k}$ has generic rank $N$ for  $k$ large enough ($k\geq d+1$ suffices). Thus, by Lemma~\ref{genericrank}, the following lemma implies that $\det H_Z \equiv 0$, which is a contradiction and completes the proof of Theorem \ref{main}.
 \end{proof}

\begin{lemma}\label{propagation}
For every $j\ge 0$, the following holds.
\begin{equation}\label{e:propagation}
\det H_Z \circ v^{2j+1} \equiv 0.
 \end{equation}
\end{lemma}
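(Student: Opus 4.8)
The plan is to prove the statement by induction on $j$, propagating the vanishing of $\det H_Z$ along the Segre chain two levels at a time. The engine will be a pair of ``transfer identities'' between the two divisors $b(z,w)$ and $c(\chi,\tau)$ produced in Lemma~\ref{divisor}. One of them is already at hand, namely \eqref{eq16}, which I rewrite as $b(z,Q(z,\chi,\tau)) = (\text{unit})\,c(\chi,\tau)$. The other I would obtain by substituting $\tau = \bar Q(\chi,z,w)$ into \eqref{div2} and invoking the reality condition \eqref{reality}, $Q(z,\chi,\bar Q(\chi,z,w)) = w$, so that the left-hand side becomes $\det a(z,w,\chi,\bar Q(\chi,z,w))$; comparing with \eqref{div1} then yields $c(\chi,\bar Q(\chi,z,w)) = (\text{unit})\,b(z,w)$. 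The prefactors are ratios of the units $u,v$ evaluated at points tending to $0$, hence genuine units in the relevant formal power series rings.

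Next I would evaluate $b$ and $c$ along the Segre maps. Writing $\beta_k := b(t^k,u^k) = b\circ v^k$ and $\gamma_k := c(t^k,\overline{u^k})$, I would use the recursion $u^k = Q(t^k,t^{k-1},\overline{u^{k-1}})$ together with its conjugate $\overline{u^k} = \bar Q(t^k,t^{k-1},u^{k-1})$ (obtained by conjugating the recursion). Substituting $(z,\chi,\tau) = (t^k,t^{k-1},\overline{u^{k-1}})$ into the first transfer identity gives $\beta_k = (\text{unit})\,\gamma_{k-1}$, while substituting $(\chi,z,w) = (t^k,t^{k-1},u^{k-1})$ into the second gives $\gamma_k = (\text{unit})\,\beta_{k-1}$, both valid for $k\ge 2$. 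Composing the two, $\beta_k = (\text{unit})\,\beta_{k-2}$ for $k \ge 3$: vanishing of $\beta$ propagates two Segre levels at a time and preserves parity.

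The base of the induction is $\beta_1 = b(t^1,u^1) = b(t^1,0) = 0$, which is exactly \eqref{eq16a}. The recursion then forces $\beta_{2j+1} \equiv 0$ for every $j \ge 0$. Finally I would transfer this back to $\det H_Z$: taking determinants in \eqref{a0} and using \eqref{div2} gives $(\det H_Z(z,Q(z,\chi,\tau)))^d = (\text{unit})\,c(\chi,\tau)\det C(z,\chi,\tau)$, and evaluating along the chain yields $(\det H_Z\circ v^k)^d = (\text{unit})\,\gamma_{k-1}\det C = (\text{unit})\,\beta_k\,\det C$; hence $\beta_k = 0$ implies $\det H_Z\circ v^k \equiv 0$ in the integral domain $\C[[t]]$. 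The case $k=1$, i.e.\ $j=0$, is immediate from the same identity at $\chi=\tau=0$, since then $Q(z,0,0)=0$ and $c(0,0)=0$. Combining, $\det H_Z\circ v^{2j+1}\equiv 0$ for all $j\ge 0$, as required.

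I expect the main obstacle to be the correct bookkeeping of the conjugate Segre structure: one must verify $\overline{u^k} = \bar Q(t^k,t^{k-1},u^{k-1})$ and check that the substitutions in the two transfer identities land \emph{exactly} on neighboring Segre maps, so that the parity-preserving recursion $\beta_k = (\text{unit})\,\beta_{k-2}$ is clean. A secondary point requiring care is ensuring that each composite prefactor remains a unit after substitution of the (zero-at-origin) Segre components, which is what keeps these relations exact equalities up to units rather than mere divisibilities.
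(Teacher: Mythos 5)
Your proof is correct and follows essentially the same route as the paper's: your two transfer identities are exactly the paper's relation $b \cong c$ on the complexification $\mathcal M$ evaluated at the Segre points $(v^{k+1},\overline{v^k})$ and $(v^{k-1},\overline{v^k})$, giving the same parity-preserving recursion $b\circ v^{k}\cong b\circ v^{k-2}$ with base case $b\circ v^1\equiv 0$ from \eqref{eq16a}. The only cosmetic difference is the last step, where the paper concludes directly from the fact that $b$ divides $(\det H_Z)^d$ in $\C[[z,w]]$, rather than via the determinant of \eqref{a0} combined with \eqref{div2}.
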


 \begin{proof} We may consider $b(z,w)$ and $c(\chi,\tau)$ in Lemmas \ref{divisor} and \ref{recursion} as power series in $(z,w,\chi,\tau)$ by  $b(z,w,\chi,\tau) =b(z,w)$ and $c(z,w,\chi,\tau) = c(\chi,\tau)$. Since the complexification $\mathcal M$ of $M$ is parametrized by
 $$(z,\chi,\tau) \mapsto (z,Q(z,\chi,\tau),\chi,\tau),$$
  it follows from \eqref{eq16} that $b \cong c$ on $\mathcal M$, where we use the notation $\alpha \cong \beta$ to mean $\alpha = \gamma \beta$ for some unit $\gamma$.
 Now, another crucial property of the Segre mappings $v^k$ (see e.g.\ \cite{BER99b}) is that $(v^{k+1},\overline{v^k}) \in\mathcal M$ and $(v^{k-1},\overline{v^k}) \in \mathcal M$ for every $k$. Consequently, equation \eqref{eq16} implies that
 \begin{equation}\label{a}
 b\circ v^{k+1} \cong c\circ \overline{v^k}
,\quad
c \circ \overline{v^{k}} \cong b\circ v^{k-1}.
 \end{equation}
We deduce that $b\circ v^{k+1} \cong b\circ v^{k-1}$ for all $k\ge 2$. By induction, we obtain, for every positive integer $j$,
\begin{equation*}
b\circ v^{2j+1} \cong b\circ v^1.
\end{equation*}
Hence, since $b\circ v^1 \equiv 0$ by \eqref{eq16a}, we conclude that $b\circ v^{2j+1} \equiv 0$. Since, by Lemma~\ref{divisor}, $b(z,w)$ is a divisor of $(\det  H_Z(z,w))^d$, it follows that
$$\det  H_Z \circ v^{2j+1} \equiv 0.$$
This completes the proof of Lemma \ref{propagation}.
 \end{proof}

 We may now prove Theorem~\ref{cor}.

 \begin{proof}[Proof of Theorem $\ref{cor}$] Since $\jac H \not\equiv 0$ and $M$ is of finite type at $p$, it follows from Proposition 2.3 in \cite{ER06} that $M'$ is of finite type at $p'$. Also, by Theorem~\ref{main}, $H$ is CR transversal to $M'$ at $p$ and thus, in particular, transversally regular to $M'$ (see \cite{ER06}). Consequently, by Theorem 6.1 in \cite{ER06}, if $M$ is also essentially finite at $p$, then $H$ is finite mapping and $M'$ is essentially finite at $p'$. If, in addition,  $M$ is finitely nondegenerate at p, then Theorem 6.6 in \cite{ER06} asserts that $H$ is local biholomorphism near $p$.
 \end{proof}
Now, to prove Theorem~\ref{holomorphicnondegeneracycase}, we shall need the following proposition.
\begin{proposition}\label{ngocanh}
Let $M, M' \subset \C^N$ be smooth generic submanifold of the same dimension through $0$ and $H: (\C^N, 0) \to (\C^N,0)$ a germ of holomorphic mapping such that $H(M) \subset M'$. Assume that $M$ is holomorphically nondegenerate at $0$. If $H$ is CR transversal to $M'$ at $0$ then  $\jac H \not\equiv 0$.
\end{proposition}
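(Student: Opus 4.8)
The plan is to argue by contradiction in the formal category, exactly as in the reduction of Section 2. Assume that $H$ is CR transversal to $M'$ at $0$, so that the matrix $a$ in \eqref{mapping} satisfies $\det a(0)\neq 0$, and suppose, contrary to the assertion, that $\jac H=\det H_Z\equiv 0$. The first observation I would record is that $a(\Psi):=a(z,Q(z,\chi,\tau),\chi,\tau)$ is invertible over $\C[[z,\chi,\tau]]$: its constant term at $(z,\chi,\tau)=0$ equals $a(0,Q(0,0,0),0,0)=a(0)$ by the normality $Q(0,0,0)=0$, and this is invertible by CR transversality. The engine of the argument is the identity \eqref{eq6}, namely $V'H_Z(\Psi)=a(\Psi)V$ with $V=(-Q_z,I_d)$, which is established inside the proof of Lemma~\ref{matrixidentity}.

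Next I would manufacture a nonzero formal tangent vector field from the degeneracy of the Jacobian. Since $\det H_Z\equiv 0$, the columns of the $N\times N$ matrix $H_Z$ are linearly dependent over the fraction field of the integral domain $\C[[Z]]$; clearing denominators produces a nonzero vector $\varphi(Z)=(\varphi^z(Z),\varphi^w(Z))\in\C[[Z]]^N$ with $H_Z\varphi\equiv 0$. Substituting $Z=\Psi$ and multiplying \eqref{eq6} on the right by the column $\varphi(\Psi)$ gives
\[
a(\Psi)\,\bigl(V\varphi(\Psi)\bigr)=V'\,(H_Z\varphi)(\Psi)=0,
\]
and the invertibility of $a(\Psi)$ forces $V\varphi(\Psi)\equiv 0$, that is, $\varphi^w(\Psi)=Q_z(z,\chi,\tau)\,\varphi^z(\Psi)$.

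Finally I would interpret this vanishing as tangency and derive the contradiction. Let $L=\sum_j\varphi_j(Z)\,\partial/\partial Z_j$ be the formal holomorphic vector field with coefficient vector $\varphi$; it is nontrivial because $\varphi\not\equiv 0$. Applying the $Z$-lift of $L$ to the complex defining functions $w_k-Q_k$ of the complexification $\mathcal M=\{w=Q(z,\chi,\tau)\}$ yields $\varphi^w_k-\sum_i\varphi^z_i Q_{k,z_i}$, which upon restriction to $\mathcal M$ (i.e.\ $Z=\Psi$) is exactly the $k$-th component of $V\varphi(\Psi)$. Thus $V\varphi(\Psi)\equiv 0$ says precisely that $L$ is tangent to $\mathcal M$, equivalently that $L$ is a nontrivial holomorphic vector field tangent to $M$, contradicting the holomorphic nondegeneracy of $M$ (understood, as throughout Section~3, in the formal sense to which we have reduced). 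This contradiction shows $\jac H\not\equiv 0$.

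The step I expect to require the most care is the identification of the purely algebraic condition $V\varphi(\Psi)\equiv 0$ with genuine tangency of $L$ to $M$. Working directly on the real submanifold one must check that the conjugate conditions $L\bar\rho_k|_M=0$ add nothing, which holds because for a generic $M$ the $(1,0)$-forms $\partial\bar\rho_k$ lie in the span of $\partial\rho_1,\dots,\partial\rho_d$ along $M$; passing instead to the complexification, as above, makes this transparent since tangency of the holomorphic field $L$ to $M$ is literally tangency of its $Z$-lift to the complex submanifold $\mathcal M$. The remaining ingredients—the null-vector construction over $\C[[Z]]$ and the invertibility of $a(\Psi)$—are routine.
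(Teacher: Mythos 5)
Your proposal is correct and takes essentially the same approach as the paper: by contradiction, you extract a nontrivial formal vector field $L$ in the kernel of $H_Z$ and use the invertibility of the transversality matrix (guaranteed by $\det a(0)\neq 0$) to show that $L$ is tangent to $M$, contradicting holomorphic nondegeneracy. The only difference is bookkeeping: you contract the restricted Jacobian identity \eqref{eq6} with the null vector on the complexification $\mathcal M$, while the paper applies $L$ directly to the conjugate mapping identity \eqref{caigivay} and uses the invertibility of $\bar a$ to obtain the same tangency conclusion.
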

\begin{proof} The idea of the proof is taken from \cite{A08}. Assume, in order to reach a contradiction, that $\jac H \equiv 0$. Then, there is a non trivial $N$-vector $U(z,w)$ with components in the field of fractions of $\C[[z,w]]$ such that
\begin{equation}\label{anhyeuem}
H_Z(z,w) U(z,w) \equiv 0.
\end{equation}
By multiplying \eqref{anhyeuem} with a suitable power series if necessary, we may assume that $U(z,w)$ has  components in $\C[[z,w]]$. Thus, we can consider the following nontrivial formal holomorphic vector field
\[
L = \sum_{j=1}^n U_j(z,w) \frac{\partial}{\partial z_j} + \sum_{l=1}^d U_{l+n}(z,w) \frac{\partial}{\partial w_l}.
\]
It follows from \eqref{anhyeuem} that $LF_j = LG_k = 0$ for all $j=1,\dots n, k=1,\dots d$.
Now, since $H$ sends $M$ into $M'$ we have
\begin{equation}\label{caigivay}
\bar G(\chi,\tau) - \bar Q'(\bar F(\chi,\tau),F(z,w), G(z,w)) = \bar a(\chi,\tau,z,w)(\tau - \bar Q(\chi,z,w)).
\end{equation}
Applying $L$ to the left hand side of \eqref{caigivay}, we obtain
\begin{align*}
-\sum_{j=1}^n\bar Q'_{z_j}(\bar F&(\chi,\tau),F(z,w), G(z,w))(LF_j)(z,w)\\
 &- \sum_{l=1}^d \bar Q'_{w_l}(\bar F(\chi,\tau),F(z,w), G(z,w))(LG_l)(z,w) \equiv 0.
\end{align*}
Consequently, we must also have $L\big(\bar a(\chi,\tau,z,w)(\tau - \bar Q(\chi,z,w))\big) \equiv 0$. In other words,
\begin{equation}\label{cuoicung}
(L\bar a)\bar \rho + \bar a (L\bar \rho) \equiv 0
\end{equation}
where $\bar \rho(z,w,\chi,\tau) = \tau - \bar Q(\chi,z,w)$. Since $H$ is CR transversal to $M'$ at $0$, we have $\det \bar a(0) \ne 0$ and hence $\bar a$ is invertible in $\C[[z,w,\chi,\tau]]$. We deduce from \eqref{cuoicung} that
\[
L\bar\rho = - (\bar a)^{-1} (L\bar a)  \bar \rho.
\]
It follows that $L$ is tangent to $M$. This is a contradiction since $M$ is holomorphically nondegenerate. The proof is complete.
\end{proof}

Theorem~\ref{holomorphicnondegeneracycase} is now a direct consequence of Theorem~\ref{main} and Proposition~\ref{ngocanh}.

\end{document}